\newtheorem{thm}{{\bf  Theorem}}
\newtheorem{cor}{{\bf  Corollary}}
\newtheorem{preconj}{{\bf  Conjecture}}
\begin{document}

\title{\bf Double-Star Decomposition of Regular Graphs
\thanks
{{\it Key Words}: Double-star, Decomposition, Regular graph, Bipartite graph }
\thanks {2010{ \it Mathematics Subject Classification}: 05C51, 05C05.
 }}

\author{{\normalsize
{\sc S. Akbari${}^{\mathsf{a}, \mathsf{c}}$},\,
  {\sc Sh. Haghi${}^{\mathsf{b}, \mathsf{c}}$},\,
  {\sc H.R. Maimani${}^{\mathsf{b}, \mathsf{c}}$}\,
  {\sc and A. Seify${}^{\mathsf{b}, \mathsf{c}}$}\,}
 \vspace{3mm}
\\{\footnotesize{${}^{\mathsf{a}}$\it Department of
Mathematical Sciences, Sharif University of Technology, Tehran,
Iran}}
{\footnotesize{}}\\{\footnotesize{${}^{\mathsf{b}}$\it Mathematics Section, Department of Basic Sciences, Shahid Rajaee Teacher Training University, P.O. Box 16783-163,  Tehran, Iran}}
{\footnotesize{}}\\{\footnotesize{${}^{\mathsf{c}}$\it School of Mathematics, Institute for Research in Fundamental Sciences (IPM),}}{\footnotesize{}}\\{\footnotesize{${}^{\mathsf{}}$\it
P.O. Box 19395-5746,
 Tehran, Iran.}}
\thanks{{\it E-mail addresses}: $\mathsf{s\_akbari@sharif.edu}$,
$\mathsf{sh.haghi@ipm.ir}$, $\mathsf{maimani@ipm.ir}$ and $\mathsf{abbas.seify@gmail.com}$.} }

\date{}

\maketitle

\begin{abstract}
A tree containing exactly two non-pendant vertices is called a double-star. A double-star with degree sequence $(k_1+1, k_2+1, 1, \ldots, 1)$ is denoted by $S_{k_1, k_2}$. We study the edge-decomposition of regular graphs into double-stars. It was proved that every double-star of size $k$ decomposes every $2k$-regular graph. In this paper, we extend this result to $(2k+1)$-regular graphs, by showing that every $(2k+1)$-regular graph containing two disjoint perfect matchings is decomposed into $S_{k_1, k_2}$ and $S_{k_1-1, k_2}$, for all positive integers $k_1$ and $k_2$ such that $k_1+k_2=k$.  
\end{abstract}

\section{Introduction}
Let $G=(V(G),E(G))$ be a graph and $v \in V(G)$. We denote the set of all neighbors of $v$ by $N(v)$. The degree of a vertex $v$ in $G$ is denoted by $d_G(v)$ (for abbreviation $d(v)$). By {\it size} and {\it order} of $G$ we mean $|E(G)|$ and $|V(G)|$, respectively. Let $X \subseteq V(G)$, then the {\it induced subgraph} with vertex set $X$ is denoted by $G[X]$. A subset $M \subseteq E(G)$ is called a {\it matching} if no two edges of $M$ are incident. A matching $M$ is called a {\it perfect matching}, if every vertex of $G$ is incident with an edge of $M$.
A {\it factor} of $G$ is a spanning subgraph of $G$. A subgraph $H$ is called an $r$-{\it factor} if $H$ is a factor of $G$ and $d_H(v)=r$, for every $v \in V(G)$. 
\\
If $d(v)=1$, then $v$ is called a {\it pendant vertex}. A tree containing exactly two non-pendant vertices is called a {\it double-star}. A double-star with degree sequence $(k_1+1, k_2+1, 1, \ldots, 1)$ is denoted by $S_{k_1, k_2}$. Suppose that $u_1, u_2 \in V(S_{k_1, k_2})$ and $d(u_i)=k_i+1$, for $i=1,2$. Let $X$ and $Y$ be the set of all pendant vertices adjacent to $u_1$ and $u_2$, respectively. Then we say that $S_{k_1, k_2}$ is a double-star with pendant sets $X$ and $Y$. Also, $e=u_1u_2$ is called the {\it central edge} of the double-star.
\\
Let $G$ and $H$ be two graphs. The {\it Cartesian product} of $G$ and $H$ is denoted by $G\, \square \,H$ and is a graph with vertex set $\{(u, v): u\in G , v \in H\}$
and two vertices $(u, v)$ and $(x, y)$ are adjacent if and only if $u = x$ and $v$ is adjacent to $y$, or  $v = y$ and $u$ is adjacent to $x$.
\\
For a graph $H$ the graph $G$ has an $H$-{\it decomposition}, if all edges of $G$ can be partitioned into subgraphs isomorphic to $H$. Also, we say that $G$ has an $\{H_1,\ldots,H_t\}$-decomposition if all edges of $G$ can be partitioned into subgraphs, each of them isomorphic to some $H_i$, for $1 \leq i \leq t$. If $G$ has an $H$-decomposition, we say that $G$ is $H$-{\it decomposable}. A graph $G$ is $k$-{\it factorable} if it can be decomposed into $k$-factors. 
\\
Let $G$ be a directed graph and $v \in V(G)$. We define $N^{+}(v)=\{u \in V(G): (v,u) \in E(G)\}$, where $(v,u)$ denotes the edge from $v$ to $u$. By {\it out-degree} of $v$ we mean $|N^{+}(v)|$ and denote it by $d_G^{+}(v)$. Similarly, we define $N^{-}(v)=\{u \in V(G): (u,v) \in E(G)\}$ and denote $|N^-(v)|$ by $d^-_G(v)$. An orientation $O$ is called {\it Eulerian} if $d^{+}_G(v)=d^{-}_G(v)$, for every $v \in V(G)$. A $k$-{\it orientation} is an orientation such that $d_G^{+}(v)=k$, for every $v \in V(G)$. A $\{k_1, \ldots, k_t\}$-{\it orientation} is an orientation such that for every $v \in V(G)$, $d_G^{+}(v)=k_i$, for some $1 \leq i \leq t$. 
\\
In 1979, K\"{o}tzig conjectured that every $(2k+1)$-regular graph is decomposed into $S_{k, k}$ if and only if it has a perfect matching. Jaeger, Payan and Kouider in 1983 proved this conjecture, see \cite{Jaeger}. El-Zanati et al. proved that every $2k$-regular graph containing a perfect matching is $S_{k, k-1}$-decomposable, see \cite{double star}. The following interesting conjecture was proposed by Ringel, see \cite{Ringel}. 

\begin{preconj}
Every tree of size $k$ decomposes the complete graph $K_{2k+1}$.
\end{preconj}

A broadening of Ringel's conjecture is due to Graham and H\"{a}ggkvist.

\begin{preconj}
Every tree of size $k$ decomposes every $2k$-regular graph.
\end{preconj}

El-Zanati et al. proved the following theorem in \cite{double star}.

\begin{thm} \label{double-n2n}
Every double-star of size $k$ decomposes every $2k$-regular graph.
\end{thm}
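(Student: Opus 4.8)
The plan is to build the decomposition from a single Eulerian orientation together with a bipartite matching argument. Fix the double-star $S_{k_1,k_2}$ we wish to use; since its size is $k$ we have $k_1+k_2=k-1$. Let $G$ be $2k$-regular on $n$ vertices, so $|E(G)|=nk$ and we must produce exactly $n$ copies of $S_{k_1,k_2}$, one ``based'' at each vertex. First I would orient $G$ so that $d^{+}(v)=d^{-}(v)=k$ for every $v$; this is possible because each component of a $2k$-regular graph is Eulerian, and orienting along an Eulerian circuit balances the in- and out-degrees.

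Second, I would choose the $n$ central edges. Form the bipartite graph $B$ with parts $V^{+}$ and $V^{-}$ (two copies of $V(G)$), joining $v^{+}$ to $u^{-}$ whenever $v\to u$ in the orientation. Then $B$ is $k$-regular, hence has a perfect matching, which I read as a permutation $\sigma$ of $V(G)$ with $v\to\sigma(v)$ an edge for every $v$. These edges $v\to\sigma(v)$ will be the central edges: $v$ plays the degree-$(k_1+1)$ centre of the double-star $D_v$ and $\sigma(v)$ its degree-$(k_2+1)$ centre, and since $\sigma$ is a bijection each vertex occupies each central role exactly once.

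Third, I would distribute the remaining $n(k-1)$ edges as leaves. After deleting the central edges the residual digraph has in- and out-degree $k-1$ at every vertex; by König's theorem its bipartite model decomposes into $k-1$ perfect matchings, and selecting $k_1$ of them yields a spanning sub-digraph $H$ with $d^{+}_H=d^{-}_H=k_1$. I would then declare each out-edge of $H$ a ``high'' leaf of the double-star based at its tail, and each remaining (non-central, non-$H$) edge a ``low'' leaf of the double-star based at $\sigma^{-1}$ of its head. A short count shows that $D_v$ then receives its central edge, exactly $k_1$ leaves at $v$, and exactly $k_2$ leaves at $\sigma(v)$, and that every edge is used once, so the degree sequence of each piece is correct.

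The main obstacle is that this degree bookkeeping does not by itself guarantee that each piece is a \emph{tree}: if some vertex $z$ is simultaneously assigned as a high leaf of $D_v$ (via $v\to z$) and as a low leaf of $D_v$ (via $z\to\sigma(v)$), then $z,v,\sigma(v)$ form a triangle and $D_v$ fails to be a double-star. Forbidding these coincidences is exactly the hard part, and I expect it to consume most of the work. The freedom available---the choice of orientation, of the central matching $\sigma$, and of which $k_1$ of the $k-1$ matchings constitute $H$---should suffice to avoid them: for each directed path $v\to z\to\sigma(v)$ one must prevent $v\to z$ and $z\to\sigma(v)$ from being coloured ``high'' and ``low'' respectively, which I would arrange by a local exchange argument that reroutes an offending leaf to a neighbouring double-star while preserving every in- and out-degree of $H$. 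Showing that all such exchanges can be carried out simultaneously, rather than merely one at a time, is the crux on which the whole argument turns.
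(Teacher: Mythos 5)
Your setup is sound as far as it goes: the Eulerian orientation exists, the bipartite out\nobreakdash-neighbour graph is $k$-regular and so yields the central matching $\sigma$, K\"onig's theorem gives the $k_1$-regular sub-digraph $H$, and your count correctly shows that each piece $D_v$ receives one central edge, $k_1$ edges at $v$ and $k_2$ edges at $\sigma(v)$, with every edge of $G$ used exactly once. But you have not proved the theorem, and you say so yourself: the entire content of the statement is concentrated in the step you defer, namely showing that one can choose the orientation, $\sigma$ and $H$ so that no vertex $z$ satisfies both $v\to z\in H$ and $z\to\sigma(v)\notin H$ for the same $v$. Without that, the pieces need not be trees, and ``a local exchange argument that reroutes an offending leaf to a neighbouring double-star'' is not an argument until you specify which neighbouring double-star receives the rerouted leaf, why the recipient does not itself acquire a new conflict, and why the process terminates. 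As written, the proposal is a correct reduction of the theorem to its only hard part, followed by an acknowledgement that the hard part is open.

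It is worth contrasting your framework with the one the paper imports from El-Zanati et al.\ (quoted here as Theorem~\ref{saad}), because the differences are exactly what make the conflict-resolution tractable there. First, the central edges are not an arbitrary permutation digraph but a $2$-factor $F$ of $G$ (Petersen's theorem), so they decompose into cycles $v_1,e_1,\dots,v_t,e_t,v_1$; consecutive double-stars then share a centre, and each vertex's out-neighbourhood in $G\setminus F$ (of size $k-1=k_1+k_2$) is \emph{partitioned} between exactly two consecutive stars, $X_j$ and $Y_{j-1}$. Second, both pendant sets are drawn from out-neighbourhoods, so a conflict is simply $X_j\cap Y_j\neq\emptyset$ with $X_j\subseteq N^{+}(v_j)$ and $Y_j\subseteq N^{+}(v_{j+1})$, and an offending vertex can be swapped for another out-neighbour, pushing the defect to the next edge of the cycle; the swaps propagate around each cycle and close up after finitely many steps. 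This is precisely the style of argument carried out in the Claims and Subcases of Theorem~\ref{main}. In your version, $\sigma$ imposes no cyclic adjacency between the pieces and the low leaves are in-edges of $\sigma(v)$, so there is no canonical ``next'' double-star to push a conflict into; repairing that, or replacing $\sigma$ by a $2$-factor and in-edges by out-edges, would essentially reconstruct the known proof.
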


Jacobson et al. in 1991 proposed the following conjecture about the tree decomposition of regular bipartite graphs, see \cite{bipartite}.

\begin{preconj}\label{conj}
Let $T$ be a tree of size $r$. Then every $r$-regular bipartite graph is $T$-decomposable.
\end{preconj}

They proved that the conjecture holds for double-stars. In this paper, we study double-star decomposition of regular graphs. First, we prove some results about the double-star decomposition of regular bipartite graphs. We present a short proof for Conjecture \ref{conj}, when $T$ is a double-star. Then we study the double-star decomposition of $(2k+1)$-regular graphs. As a straight result of Theorem \ref{double-n2n}, we prove that every $(2k+1)$-regular graph containing a $2$-factor, has $S$-decomposition, where $S=\{S_{k_1, k_2}, S_{k_1-1, k_2}, S_{k_1, k_2-1}, S_{k_1-1, k_2-1}\}$ for any double-star $S_{k_1, k_2}$ of size $k+1$. Then we present a theorem which indicates that every $(2k+1)$-regular graph containing two disjoint perfect matchings is $\{S_{k_1, k_2}, S_{k_1-1, k_2}\}$-decomposable, for any double-star $S_{k_1, k_2}$ of size $k+1$. Also, we prove that every triangle-free $(2k+1)$-regular graph containing a $2$-factor, is $\{S_{k_1, k_2}, S_{k_1+1, k_2}\}$-decomposable, for any double-star $S_{k_1, k_2}$ of size $k$.

\section{Double-Star Decomposition of Regular Bipartite Graphs} 
In this section, we prove some results about the double-star decomposition of regular bipartite graphs. The following theorem was proved in \cite{bipartite}. We present a short proof for this result.

\begin{thm}\label{double-bipartite}
For $r \geq 3$, let $G$ be an $r$-regular bipartite graph. Then every double-star of size $r$ decomposes $G$.
\end{thm}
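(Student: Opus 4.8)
The plan is to exploit the fact that every regular bipartite graph is $1$-factorable. Write the target double-star as $S_{k_1,k_2}$ with $k_1+k_2=r-1$; since a double-star has exactly two vertices of degree at least two, we have $k_1,k_2\ge 1$, and this is precisely where the hypothesis $r\ge 3$ is needed. Let $A$ and $B$ be the two parts of $G$. By König's edge-colouring theorem, the $r$-regular bipartite graph $G$ decomposes into $r$ perfect matchings $M_1,\dots,M_r$. I would reserve $M_r$ to supply the \emph{central} edges of the copies, and then set $H=M_1\cup\cdots\cup M_{k_1}$ and $F=M_{k_1+1}\cup\cdots\cup M_{r-1}$. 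Thus $H$ is a $k_1$-factor of $G$, $F$ is a $k_2$-factor (there being $r-1-k_1=k_2$ matchings in it), and $E(G)$ is the disjoint union $M_r\,\dot\cup\,E(H)\,\dot\cup\,E(F)$.

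Next I would build the copies. For each central edge $ab\in M_r$ with $a\in A$ and $b\in B$, form one copy of the double-star whose central edge is $ab$, whose first centre is $a$ carrying its $k_1$ incident $H$-edges as pendant edges, and whose second centre is $b$ carrying its $k_2$ incident $F$-edges as pendant edges. Because $H$ is $k_1$-regular, $a$ has exactly $k_1$ incident $H$-edges, and because $F$ is $k_2$-regular, $b$ has exactly $k_2$ incident $F$-edges, so the prescribed degrees $k_1+1$ and $k_2+1$ are met. Since every vertex of $A$ meets exactly one edge of the perfect matching $M_r$, each $H$-edge is assigned to exactly one copy through its $A$-endpoint; likewise each $F$-edge is assigned through its $B$-endpoint. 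Hence the copies are edge-disjoint and together exhaust $E(G)$.

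It remains only to check that each copy is a genuine $S_{k_1,k_2}$, and this is the bookkeeping step I would carry out carefully: the $k_1$ pendant neighbours of $a$ all lie in $B$ and the $k_2$ pendant neighbours of $b$ all lie in $A$, so the two pendant sets are disjoint; moreover neither pendant set contains the opposite centre, because the only edges realising such a coincidence are $ab$ itself, and $ab\in M_r$ lies in neither $H$ nor $F$. Thus each copy is a tree with exactly the two non-pendant vertices $a$ and $b$, i.e. a copy of $S_{k_1,k_2}$. The only substantive ingredient is the $1$-factorization guaranteeing that $H$ (equivalently a $k_1$-factor) exists; once that is invoked the construction is essentially forced, so I do not expect any serious obstacle beyond verifying the degree bookkeeping above.
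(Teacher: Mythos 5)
Your proposal is correct and follows essentially the same route as the paper: a König $1$-factorization $M_1,\dots,M_r$, with $M_r$ supplying the central edges and the remaining matchings split into a $k_1$-factor attached at the $A$-endpoints and a $k_2$-factor attached at the $B$-endpoints. Your write-up is in fact a little more careful than the paper's in verifying that each copy is genuinely an $S_{k_1,k_2}$ and that the copies are edge-disjoint.
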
 

\begin{proof}
Let $A$ and $B$ be two parts of $G$. Then K\"{o}nig Theorem \cite[Theorem 2.2]{Kano} implies that $G$ has a 1-factorization with 1-factors $M_1, \ldots, M_r$. Suppose that $S_{k_1,k_2}$ is a double-star of size $r$. Now, let $G_1$ and $G_2$ be two induced subgraphs of $G$ with the edges $M_1 \cup M_2 \cup \cdots \cup M_{k_1}$ and $M_{k_1+1} \cup \cdots \cup M_{r-1}$, respectively. Suppose that $e=u_1u_2 \in M_r$, where $u_1 \in A$ and $u_2 \in B$. Now, define $S_e$ the double-star containing the central edge $e$, $E_1(u_1)$ and $E_2(u_2)$, where $E_i(u_i)$ is the set of all edges incident with $u_i$ in $G_i$. Clearly, $S_e$ is isomorphic to $S_{k_1,k_2}$. On the other hand, $S_{e}$ and $S_{e'}$ are edge disjoint, for every two distinct edges $e, e' \in M_r$. Hence, $E(G)=\cup_{e \in M_1} S_e$ and this completes the proof.
\end{proof}

Now, we have the following corollaries.
\begin{cor}
Let $r,s \geq 3$ be positive integers and $s\,|\,r$. Then every $r$-regular bipartite graph can be decomposed into any double-star of size $s$. 
\end{cor}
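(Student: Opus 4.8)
The plan is to reduce the statement to Theorem~\ref{double-bipartite} by splitting $G$ into several $s$-regular bipartite pieces. First I would invoke K\"onig's Theorem (as already used in the proof of Theorem~\ref{double-bipartite}) to obtain a $1$-factorization of $G$ into perfect matchings $M_1, \ldots, M_r$; this is available since $G$ is $r$-regular bipartite. Writing $r = st$ for the positive integer $t$ guaranteed by the hypothesis $s \mid r$, I would then partition the index set $\{1, \ldots, r\}$ into $t$ blocks of size exactly $s$.

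Next, for each block I would form the spanning subgraph $G_j$ whose edge set is the union of the $s$ matchings in that block. Each such $G_j$ inherits bipartiteness from $G$, and since it is an edge-disjoint union of $s$ perfect matchings of $G$, every vertex has degree exactly $s$ in $G_j$; hence $G_j$ is an $s$-regular bipartite graph. Because $s \geq 3$ by hypothesis, Theorem~\ref{double-bipartite} applies directly and shows that $G_j$ is decomposable into copies of any prescribed double-star of size $s$.

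Finally, I would observe that the subgraphs $G_1, \ldots, G_t$ are pairwise edge-disjoint and together cover all of $E(G)$, since the blocks partition the $1$-factorization. Taking the union of the double-star decompositions of the individual $G_j$ therefore yields a decomposition of $G$ into copies of the given double-star of size $s$, completing the argument.

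I do not expect any genuine obstacle here, as the result is a routine consequence of Theorem~\ref{double-bipartite}; the only point requiring a line of care is verifying that each block $G_j$ is indeed $s$-regular (so that the divisibility hypothesis is used precisely to make the blocks have the right common degree) and that $s \geq 3$ ensures the earlier theorem is applicable.
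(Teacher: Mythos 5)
Your proposal is correct and follows essentially the same route as the paper: take a $1$-factorization of $G$, group the $r$ perfect matchings into $r/s$ blocks of $s$ to obtain $s$-regular spanning bipartite subgraphs, and apply Theorem \ref{double-bipartite} to each. The only difference is that you spell out the regularity check explicitly, which the paper leaves implicit.
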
 

\begin{proof}
Let $r=sk$ and $S_{k_1,k_2}$ be a double-star of size $s$. Since $G$ is 1-factorable, $G$ can be decomposed into spanning subgraphs $G_1, \ldots, G_k$, each of them is $s$-regular. Now, Theorem \ref{double-bipartite} implies that each $G_i$ can be decomposed into $S_{k_1,k_2}$ and this completes the proof. 
\end{proof}

\begin{cor}
Let $r, s, k$ and $t$ be positive integers such that $r=sk+t$ and $r, s, t \geq 3$. Moreover, suppose that $S_{1}$ and $S_{2}$ be two double-stars of size $s$ and $t$, respectively. Then every $r$-regular bipartite graph $G$ is $\{S_{1}, S_{2}\}$-decomposable. 
\end{cor}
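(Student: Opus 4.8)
The plan is to reduce the statement to repeated applications of Theorem \ref{double-bipartite} by exploiting a one-factorization of $G$. Since $G$ is $r$-regular bipartite with $r \ge 3$, K\"onig's Theorem guarantees that $G$ decomposes into perfect matchings $M_1, \ldots, M_r$. The identity $r = sk + t$ then prescribes how to bundle these matchings: I would set aside the last $t$ of them and partition the remaining $sk$ matchings into $k$ consecutive blocks of size $s$.

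First I would form, for each $1 \le i \le k$, the spanning subgraph $H_i$ with edge set $M_{(i-1)s+1} \cup \cdots \cup M_{is}$. As a union of $s$ perfect matchings of $G$, each $H_i$ inherits the bipartition of $G$ and is $s$-regular bipartite. Since $s \ge 3$, Theorem \ref{double-bipartite} applies and yields a decomposition of $H_i$ into copies of the double-star $S_1$ of size $s$. Next I would form the spanning subgraph $H_0$ with edge set $M_{sk+1} \cup \cdots \cup M_{sk+t}$; the same reasoning shows that $H_0$ is $t$-regular bipartite, and because $t \ge 3$, Theorem \ref{double-bipartite} decomposes $H_0$ into copies of $S_2$.

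Finally, since the edge sets of $H_0, H_1, \ldots, H_k$ partition $E(G)$, the union of all these individual double-star decompositions is precisely an $\{S_1, S_2\}$-decomposition of $G$, completing the argument.

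There is no serious obstacle here, as the entire content is supplied by Theorem \ref{double-bipartite}; the only point requiring a moment's care is the elementary observation that a union of perfect matchings of a bipartite graph is again a regular bipartite graph, so that the theorem may legitimately be invoked on each block. The hypotheses $s, t \ge 3$ are exactly what guarantees that each such invocation is valid.
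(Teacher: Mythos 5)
Your proposal is correct and follows essentially the same route as the paper: both use the 1-factorization of the $r$-regular bipartite graph to split $G$ into $k$ spanning $s$-regular subgraphs and one spanning $t$-regular subgraph, then invoke Theorem \ref{double-bipartite} on each piece. The hypotheses $s,t\geq 3$ are indeed exactly what is needed for each invocation, as you note.
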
 

\begin{proof}
Similar to the proof of the previous corollary, $G$ is decomposed into $G_1, \ldots, G_{k+1}$ where $G_1, \ldots, G_k$ are $s$-regular and $G_{k+1}$ is $t$-regular. Now, Theorem \ref{double-bipartite} implies that $G_1, \ldots, G_k$ and $G_{k+1}$ can be decomposed into $S_{1}$ and $S_{2}$, respectively. This completes the proof.
\end{proof}

Another generalization of Theorem \ref{double-bipartite} is as follows.

\begin{thm}\label{gen-bi}
Let $r \geq 3$ be an integer and $G=(A,B)$ be a bipartite graph such that for every $v \in V(G)$, $r\,|\,d(v)$. Then every double-star of size $r$ decomposes $G$.
\end{thm}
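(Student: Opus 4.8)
The plan is to reduce Theorem \ref{gen-bi} to the regular case already settled in Theorem \ref{double-bipartite} by means of a vertex-splitting construction. Fix a double-star $S_{k_1,k_2}$ of size $r$, so that $k_1+k_2+1=r$. Since $r\mid d(v)$ for every $v$, write $d(v)=r\,m_v$. I would build an auxiliary bipartite graph $G^{*}$ as follows: replace each vertex $v$ by $m_v$ copies $v^{1},\dots,v^{m_v}$, partition the $r\,m_v$ edges incident with $v$ into $m_v$ blocks of size exactly $r$, and attach the $i$-th block to the copy $v^{i}$. Carrying this out independently at both endpoints of every edge sends each edge $ab$ of $G$ to a single edge joining the copy of $a$ that received it to the copy of $b$ that received it. This yields a bijection between $E(G)$ and $E(G^{*})$, and by construction every copy has degree exactly $r$, so $G^{*}$ is an $r$-regular bipartite graph; it is moreover simple, since two parallel edges of $G^{*}$ would force two edges between $a$ and $b$ in $G$.

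Next I would apply Theorem \ref{double-bipartite} to $G^{*}$: as $r\ge 3$, the $r$-regular bipartite graph $G^{*}$ is $S_{k_1,k_2}$-decomposable. Pulling this partition back through the edge-bijection gives a partition of $E(G)$ into edge sets, each being the projection (under the identification of all copies of a vertex) of a single double-star of $G^{*}$. It then remains to check that every such projection is again a copy of $S_{k_1,k_2}$ in $G$.

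The hard part is exactly here: one must rule out that merging copies degenerates a double-star of $G^{*}$ into a non-tree. Consider a double-star of $G^{*}$ with central edge $\alpha\beta$, where $\alpha$ is a copy of $a\in A$ and $\beta$ a copy of $b\in B$; its $k_1$ leaves at $\alpha$ are copies of neighbours of $a$ in $B$, and its $k_2$ leaves at $\beta$ are copies of neighbours of $b$ in $A$. After projection the central edge becomes $ab$, the leaves at $\alpha$ land in $B$, and the leaves at $\beta$ land in $A$. I would argue that all these images are distinct: two images of leaves at the same centre would give two $G$-edges from that centre to one vertex, impossible in a simple graph; a leaf whose image coincided with a centre would reproduce the central edge $ab$, contradicting injectivity on edges; and, crucially, a leaf at $\alpha$ can never coincide with a leaf at $\beta$ because the two families lie on opposite sides $B$ and $A$ of the bipartition. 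Bipartiteness is precisely what forbids the triangle on $\{a,b,\cdot\}$ that would otherwise spoil the tree structure. Hence each projection is a genuine $S_{k_1,k_2}$, the projections are edge-disjoint and cover $E(G)$ by the bijection, and $G$ is $S_{k_1,k_2}$-decomposable.

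In summary, the obstacle is conceptual rather than computational: the point is to recognise that an arbitrary degree-divisible bipartite graph can be \emph{unfolded} into a regular one without disturbing simplicity, and that \emph{folding} the regular decomposition back is harmless exactly because the bipartition keeps the two leaf-families apart. Everything else — the degree bookkeeping showing $G^{*}$ is $r$-regular, and the edge-bijection itself — is routine.
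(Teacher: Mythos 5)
Your proposal is correct and follows essentially the same route as the paper: both split each vertex $v$ into $d(v)/r$ copies of degree $r$ to obtain an $r$-regular bipartite graph, apply Theorem \ref{double-bipartite}, and fold the resulting decomposition back onto $G$. The only difference is that you perform the splitting all at once and spell out why the folded-back stars remain genuine double-stars, whereas the paper splits one vertex at a time and leaves that verification as ``not hard to see.''
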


\begin{proof}
Let $v \in A$ and $d(v)=rk$, for some positive integer $k$ and $S$ be a double-star of size $r$. Suppose that $N(v)=\{ u_1, \ldots, u_{rk} \} $. Let $G'$ be the graph obtained from $G$ by removing $v$ and adding $v_1, \ldots, v_k$ to $A$. For $i=1, \ldots, k$, join $v_i$ to every vertex of the set $ \{ u_{(i-1)r+1}, \ldots, u_{ir} \}  $. It is not hard to see that if $G'$ is $S$-decomposable, then $G$ is $S$-decomposable, too.
\\
By repeating this procedure for all vertices of $G$, one can obtain an $r$-regular bipartite graph, say $H$. Now,
Theorem \ref{double-bipartite} implies that $H$ is $S$-decomposable and hence $G$ is $S$-decomposable.
\end{proof}

\section{Double-Star Decomposition of Odd Regular Graphs}
In this section, we use the following structure which was used in \cite{bipartite}. Let $G$ be a $2k$-regular graph. Then Petersen Theorem \cite[Theorem 3.1]{Kano} implies that $G$ is $2$-factorable. Let $F$ be a $2$-factor of $G$ with cycles $C_1, \ldots, C_l$. Note that $G \setminus F$ has an Eulerian orientation. Also, orient $C_i$ clockwise, for $i=1, \ldots, l$, to obtain an Eulerian orientation of $G$. We define $G_{C_i}$ as the subgraph of $G$ with the edge set $E=\{ (u,v): u \in V(C_i) \}$. Clearly, $\{ G_{C_1}, \ldots, G_{C_l} \}$ partitions $E(G)$. So, if we show that each $G_{C_i}$ is $S_{k_1, k_2}$-decomposable, then $G$ is $S_{k_1, k_2}$-decomposable too, for every double-star $S_{k_1, k_2}$ of size $k$. In \cite[Theorem 3]{double star}, the following was proved.

\begin{thm}\label{saad}
Let $C_i: v_1, e_1, \ldots, v_t, e_t, v_1$, where $1 \leq i \leq l$. Then $N^{+}_{G \setminus F}(v_j)$ can be partitioned into $X_j$ and $Y_{j-1}$ such that $|X_j|=k_1$, $|Y_{j-1}|=k_2$ and $G_{C_i}[\{v_j, v_{j+1}\} \cup X_j \cup Y_j ]$ is isomorphic to $S_{k_1, k_2}$, for $j=1, \ldots, t$.
\end{thm}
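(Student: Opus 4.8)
The plan is to read the statement as a purely local claim about a single oriented cycle $C_i$ and to build the required double-stars by sweeping once around $C_i$. Recall from the Eulerian orientation that every vertex of $G$ has out-degree $k$, and since the clockwise orientation of $C_i$ contributes exactly one out-edge at each $v_j$ (the cycle edge to $v_{j+1}$), we have $|N^+_{G\setminus F}(v_j)| = k-1 = k_1+k_2$ for every $j$. Hence a partition of $N^+_{G\setminus F}(v_j)$ into a set $X_j$ of size $k_1$ and a set $Y_{j-1}$ of size $k_2$ is always possible as far as cardinalities go; the content of the theorem is to choose these partitions so that the resulting configurations are genuine double-stars. I would first record that, once such partitions are fixed, the candidate edge sets $S_j := \{v_jv_{j+1}\}\cup\{v_jx : x\in X_j\}\cup\{v_{j+1}y : y\in Y_j\}$ automatically partition $E(G_{C_i})$: every edge of $G_{C_i}$ leaves a unique $v_j\in V(C_i)$, and at $v_j$ it is either the cycle edge (central edge of $S_j$), or lands in $X_j$ (a pendant of $S_j$), or lands in $Y_{j-1}$ (a pendant of $S_{j-1}$). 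The count $\sum_{j=1}^{t}|E(S_j)| = t(k_1+k_2+1) = tk = |E(G_{C_i})|$ confirms this is an exact partition.

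Next I would isolate what can go wrong. The set $S_j$ fails to be a copy of $S_{k_1,k_2}$ only if its two pendant sets overlap, that is, if some vertex $w$ lies in $X_j\cap Y_j$; then $w$ is joined to both centers $v_j$ and $v_{j+1}$ and, with the central edge, forms a triangle rather than a tree. Since $X_j\subseteq N^+_{G\setminus F}(v_j)$ and $Y_j\subseteq N^+_{G\setminus F}(v_{j+1})$, such a $w$ must be a common out-neighbor of the consecutive cycle vertices $v_j$ and $v_{j+1}$. Thus the whole problem reduces to choosing the partitions so that for every $j$ no common out-neighbor of $v_j$ and $v_{j+1}$ is placed simultaneously in $X_j$ and in $Y_j$.

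The construction I would use is a greedy sweep $v_1,v_2,\ldots,v_t$. Fix an arbitrary $k_2$-subset $Y_0\subseteq N^+_{G\setminus F}(v_1)$, which determines $X_1$. Then for $j=1,\ldots,t-1$, with $X_j$ already chosen, take $Y_j$ to be any $k_2$-subset of $N^+_{G\setminus F}(v_{j+1})\setminus X_j$ and set $X_{j+1} := N^+_{G\setminus F}(v_{j+1})\setminus Y_j$. This is always possible, since $|X_j|=k_1$ forces $|N^+_{G\setminus F}(v_{j+1})\cap X_j|\le k_1$, leaving at least $(k-1)-k_1=k_2$ vertices of $N^+_{G\setminus F}(v_{j+1})$ from which to draw $Y_j$. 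By construction $X_j\cap Y_j=\emptyset$, so each of $S_1,\ldots,S_{t-1}$ is a bona fide copy of $S_{k_1,k_2}$.

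The one remaining, and genuinely delicate, point is the wrap-around star $S_t$ with central edge $v_tv_1$, whose validity demands $X_t\cap Y_t=\emptyset$ where $Y_t=Y_0$ was committed at the very first step, before $X_t$ was known. This single cyclic constraint is where I expect the real work to lie. My plan to close it is an exchange argument: if $w\in X_t\cap Y_0$, relocate $w$ across one of the two stars meeting at $v_1$ (say move $w$ from $Y_0$ into $X_1$ and compensate by moving a suitable $w'$ from $X_1$ into $Y_0$), choosing $w'$ so that no new overlap is created at $S_1$ or $S_t$, and argue via the monotone potential ``total number of conflicting vertices'' that finitely many such swaps clear the conflict. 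An equivalent and perhaps cleaner route is to model the entire assignment as a degree-constrained flow, in which each out-neighbor $w$ imposes a ``no $X$-then-$Y$'' monotonicity constraint along the maximal runs of cycle vertices having $w$ as an out-neighbor, and to verify the associated Hall-type condition; the extreme case in which all $v_j$ share a common out-neighborhood (forcing all $Y_j$ to coincide) is consistent, which is strong evidence that this condition always holds and that the cyclic dependency can be broken.
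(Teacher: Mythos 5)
First, a point of reference: the paper does not prove Theorem \ref{saad} at all --- it is quoted from \cite[Theorem 3]{double star} --- so there is no in-paper proof to compare yours against, and your attempt must stand on its own. Your setup is correct and cleanly done: the count $|N^{+}_{G\setminus F}(v_j)|=k-1=k_1+k_2$, the observation that the candidate edge sets $S_j$ automatically partition $E(G_{C_i})$ for any choice of the partitions, the identification of the unique failure mode $X_j\cap Y_j\neq\emptyset$ (a common out-neighbour of $v_j$ and $v_{j+1}$ placed in both pendant sets), and the greedy sweep guaranteeing $X_j\cap Y_j=\emptyset$ for $j=1,\ldots,t-1$ are all right.

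The gap is that you stop exactly where the theorem begins: the cyclic constraint $X_t\cap Y_t=X_t\cap Y_0=\emptyset$ is declared fixable but is not fixed, and neither of your sketched routes is carried out. Worse, the first route is not sound as described. Suppose $w\in X_t\cap Y_0$. Moving $w$ from $Y_0$ into $X_1$ and ejecting a compensating $w'\in X_1\setminus X_t$ (such a $w'$ does exist) removes the conflict \emph{only if} $w\notin Y_1$; the symmetric swap at the other end of $e_t$ works only if $w\notin X_{t-1}$. When $w\in Y_1\cap X_{t-1}$ --- which is perfectly possible --- any single local swap merely transfers the conflict to a neighbouring edge, so your potential ``total number of conflicting vertices'' does not decrease and the swaps must be chained, possibly all the way around the cycle, with a proof that the chain terminates and creates no new overlaps. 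This is precisely the delicate global rotation that the present paper is forced to perform in Subcase 2.1 of its proof of Theorem \ref{main} (the case $X_1=X_2=\cdots=X_l$), which is strong evidence that no purely local argument suffices. Your alternative flow/Hall formulation is plausible, but the ``associated Hall-type condition'' is never written down, let alone verified; checking one consistent extreme case is not a proof. In short, the reduction to the cyclic system of disjointness constraints is correct, but the satisfiability of that system --- which is the entire content of the theorem --- remains unproved.
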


As a straightforward result of Theorem \ref{saad}, we prove the following corollary.
\begin{cor}
Let $G$ be a $(2k+1)$-regular graph and $k_1$ and $k_2$ be two positive integers such that $k_1+k_2=k$. If $G$ has a $2$-factor, then $G$ is $S$-decomposable, for $S=\{S_{k_1,k_2},S_{k_1-1,k_2},S_{k_1,k_2-1},S_{k_1-1,k_2-1}\}$. 
\end{cor}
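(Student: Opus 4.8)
The plan is to mimic the construction that yields Theorem~\ref{saad}, the only essential change being that here $G\setminus F$ is odd-regular, so an Eulerian orientation is unavailable and must be replaced by a balanced one; the four double-stars in $S$ will then appear according to the two possible out-degrees.

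First I would fix a $2$-factor $F$ of $G$ with cycles $C_1,\dots,C_l$ and put $H:=G\setminus F$, which is $(2k-1)$-regular. Since $2k-1$ is odd, $H$ has no Eulerian orientation, but every graph admits an orientation with $|d^{+}(v)-d^{-}(v)|\le 1$ at each vertex: join a new vertex to all odd-degree vertices (an even number), take an Euler tour of each component of the resulting even graph, orient along it, and delete the added vertex. In such an orientation of $H$ each vertex has odd degree $2k-1$, so $\{d^{+}_H(v),d^{-}_H(v)\}=\{k-1,k\}$ and hence $d^{+}_H(v)\in\{k-1,k\}$. Orienting every cycle $C_i$ clockwise and combining the two orientations gives an orientation of $G$, and by the same argument as in the paragraph preceding Theorem~\ref{saad} the subgraphs $G_{C_1},\dots,G_{C_l}$ (the edges whose tail lies on $C_i$) partition $E(G)$. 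It therefore suffices to decompose each $G_{C_i}$ into members of $S$.

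Fix a cycle $C_i:v_1,\dots,v_t$. At each vertex $v_j$ I would split its $H$-out-neighbours $N^{+}_H(v_j)$ into a forward part $X_j$ and a backward part $Y_{j-1}$, and build the double-star $S_j$ from the central edge $v_jv_{j+1}$ together with $X_j$ (pendants at $v_j$) and $Y_j$ (pendants at $v_{j+1}$); then $S_j\cong S_{|X_j|,|Y_j|}$ and the $S_j$ partition $E(G_{C_i})$. The prescribed sizes can always be taken inside the admissible windows $|X_j|\in\{k_1-1,k_1\}$ and $|Y_{j-1}|\in\{k_2-1,k_2\}$: if $d^{+}_H(v_j)=k=k_1+k_2$ take $(|X_j|,|Y_{j-1}|)=(k_1,k_2)$, while if $d^{+}_H(v_j)=k-1$ take $(k_1-1,k_2)$ or $(k_1,k_2-1)$. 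Consequently every $|X_j|\in\{k_1-1,k_1\}$ and every $|Y_j|\in\{k_2-1,k_2\}$, so each $S_j$ is one of $S_{k_1,k_2},S_{k_1-1,k_2},S_{k_1,k_2-1},S_{k_1-1,k_2-1}$, i.e. a member of $S$.

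The one genuine obstacle is exactly the point that makes Theorem~\ref{saad} nontrivial: I must be able to realise these sizes so that the edges assigned to each $S_j$ really form a double-star. The only obstruction is a vertex $w$ that is a common $H$-out-neighbour of consecutive vertices $v_j,v_{j+1}$ and is placed simultaneously in $X_j$ and in $Y_j$, which would turn $S_j$ into the triangle on $v_j,v_{j+1},w$ with extra pendants. Ruling this out by a careful choice of the partitions is precisely the combinatorial core carried out in \cite{double star}, and I would invoke that argument here. Crucially, the windows above are wider than the exact sizes used in the uniform case, so the assignment we must find satisfies strictly weaker constraints; hence the same method applies, and assembling the decompositions of the individual $G_{C_i}$ produces the required $S$-decomposition of $G$.
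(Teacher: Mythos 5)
Your strategy --- replace the Eulerian orientation by a balanced orientation of $G\setminus F$ with out-degrees in $\{k-1,k\}$, and let the pendant-set sizes float in the windows $\{k_1-1,k_1\}$ and $\{k_2-1,k_2\}$ --- is a genuinely different route from the paper's, and everything up to your last paragraph is sound. The gap sits exactly where you put all the weight: the existence, for every cycle of $F$, of partitions $N^{+}(v_j)=X_j\cup Y_{j-1}$ with the prescribed sizes and with $X_j\cap Y_j=\emptyset$ for all $j$. You dispose of this by saying the argument of \cite{double star} applies because your constraints are ``strictly weaker,'' but that is not so: Theorem \ref{saad} is stated and proved only for the Eulerian, uniform-size setting, and the swap arguments used for such statements (see Claims 1 and 2 in the proof of Theorem \ref{main} in this paper) rely on the equality $|X_j|=|X_{j+1}|=k_1$ in an essential way to guarantee an element $t\in X_{j+1}\setminus X_j$ available for swapping. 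Once the sizes may differ from vertex to vertex --- e.g.\ $|X_{j+1}|=k_1-1<k_1=|X_j|$, in which case $X_{j+1}\subseteq X_j$ is possible and no such $t$ need exist --- that step breaks, so the non-uniform version is a new theorem needing its own proof, not a relaxation of the old one. Invoking the cited argument outside its hypotheses leaves the corollary unproved.

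For comparison, the paper sidesteps the issue entirely: it forms $H=G\,\square\,K_2$, which is $(2k+2)$-regular, takes $F_1\cup F_2$ (the two copies of the $2$-factor) as a $2$-factor of $H$, applies Theorem \ref{saad} verbatim to obtain an $S_{k_1,k_2}$-decomposition of $H$ whose central edges lie inside the two copies of $G$, and restricts to one copy; each double-star can lose at most one pendant edge at each of its two centers (the cross edges $v_iv_i'$), which is exactly what produces the four members of $S$. If you wish to keep your direct approach, you must actually prove the balanced-orientation analogue of Theorem \ref{saad}; otherwise the reduction to $G\,\square\,K_2$ is the cheaper path.
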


\begin{proof}
Let $H= G \,\square \,K_2$. Now, $H$ is a $(2k+2)$-regular graph. Let $F_1$ and $F_2$ be two 2-factors in two copies of $G$, namely $G_1$ and $G_2$. Clearly, $F_1 \cup F_2$ is a 2-factor in $H$. Now, Theorem \ref{saad} implies that $H$ has an $S_{k_1,k_2}$ decomposition in which the central edges of double stars are exactly the edges of $F_1 \cup F_2$. Clearly, in this decomposition each double-star has at most two edges with one end point in $V(G_1)$ and other end point in $V(G_2)$. By restriction of this decomposition to $G_1$, we are done.
\end{proof}

Here, we prove two results about the double-star decomposition of $(2k+1)$-regular graphs. 

\begin{thm}\label{main}
Let $G$ be a $(2k+1)$-regular graph containing two disjoint perfect matchings $M_1$ and $M_2$. If $k_1$ and $k_2$ are two positive integers such that $k_1 + k_2 = k$, then $G$ is $\{S_{k_1,k_2}, S_{k_1-1, k_2} \}$-decomposable. 
\end{thm}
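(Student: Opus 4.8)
The plan is to adapt the 2-factor-plus-orientation machinery developed just before Theorem \ref{saad} to the odd-regular setting, using the two matchings both to manufacture the central edges and to manufacture a carefully controlled orientation of the rest of the graph.

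First I would set $F=M_1\cup M_2$. Since $M_1$ and $M_2$ are disjoint perfect matchings, $F$ is a $2$-factor whose components are (even) cycles $C_1,\ldots,C_l$; these supply the central edges of the double-stars, exactly one per edge of $F$. I orient each $C_i$ cyclically, so every edge $e_j=v_jv_{j+1}$ of $F$ becomes a directed central edge $(v_j,v_{j+1})$, with $v_j$ its tail and $v_{j+1}$ its head. The total number of central edges is then $|E(F)|=|V(G)|$, which is exactly the number of double-stars a correct decomposition must contain.

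The crux is to orient $H:=G\setminus F$, which is $(2k-1)$-regular, so that every out-degree lies in $\{k-1,k\}$. I would obtain this from an Eulerian orientation of the $2k$-regular graph $G\setminus M_2$ (which exists since $G\setminus M_2$ is even-regular), giving every vertex out-degree $k$ there, and then delete the edges of $M_1$. Each vertex is incident to exactly one edge of $M_1$, so deleting $M_1$ lowers that vertex's out-degree by $1$ if its $M_1$-edge pointed outward and leaves it at $k$ otherwise; hence $d^+_H(v)\in\{k-1,k\}$ for every $v$. Moreover each $M_1$-edge points out of exactly one of its endpoints, so precisely half the vertices of $G$ attain each value, which matches the edge count forcing equally many double-stars of each type.

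With both orientations fixed, I would run the splitting principle of Theorem \ref{saad}, but now reserving a fixed $k_2$ out-neighbors for the head role at \emph{every} vertex: at each vertex $w$ I set aside $k_2$ of its $H$-out-neighbors for its role as the head of the central edge entering it (call this set $Y$), and assign the remaining $d^+_H(w)-k_2\in\{k_1,k_1-1\}$ out-neighbors to its role as the tail of the central edge leaving it (call this set $X$). This is feasible because $d^+_H(w)\geq k-1=k_1+k_2-1\geq k_2$, using $k_1\geq1$. The double-star built on $e_j=(v_j,v_{j+1})$ then has $k_2$ pendants at its head $v_{j+1}$ and $d^+_H(v_j)-k_2$ pendants at its tail $v_j$, so it is $S_{k_1,k_2}$ when $d^+_H(v_j)=k$ and $S_{k_1-1,k_2}$ when $d^+_H(v_j)=k-1$. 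Exactly as in the even case, the edges oriented out of each vertex are partitioned between the two double-stars meeting it, so these double-stars tile $E(G)$. The only genuinely new ingredient beyond Theorem \ref{saad} is the orientation step, and I expect that to be the main point to get right; once $H$ is oriented with all out-degrees in $\{k-1,k\}$, the rest is the cited construction applied to $F=M_1\cup M_2$.
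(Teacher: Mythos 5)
Your overall architecture --- take $F=M_1\cup M_2$ as the source of central edges, orient $G\setminus M_2$ Eulerianly and then delete $M_1$ so that $H=G\setminus F$ gets out-degrees in $\{k-1,k\}$, and finally split each vertex's out-neighborhood between its ``head'' and ``tail'' roles --- is a genuinely different route from the paper, which instead passes to $G\,\square\,K_2$, applies Theorem \ref{saad} verbatim to that $(2k+2)$-regular graph, and then performs two rounds of local pendant swaps (its Claims 1 and 2) so that the restriction of the decomposition to one copy of $G$ has the right shape. Your orientation step is correct, and your count that exactly half the pieces must be of each size is right.

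However, there is a genuine gap at the decisive step. Declaring that each vertex ``sets aside $k_2$ out-neighbors for the head role and the rest for the tail role'' does not by itself produce double-stars: for the piece built on $e_j=(v_j,v_{j+1})$ to be a tree isomorphic to $S_{|X_j|,k_2}$ you need $X_j\cap Y_j=\emptyset$, where $X_j\subseteq N^+_H(v_j)$ and $Y_j\subseteq N^+_H(v_{j+1})$; if a common out-neighbor of $v_j$ and $v_{j+1}$ lands in both sets, the piece contains a triangle and is not a double-star. Arranging this disjointness simultaneously for every edge of every cycle of $F$ is precisely the nontrivial content of Theorem \ref{saad}, and its proof in \cite{double star} is given only in the uniform situation where all out-degrees are equal and every $X_j$ has the same size $k_1$. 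In your setting the out-degrees and the sizes $|X_j|\in\{k_1-1,k_1\}$ vary, and the constraints $X_j\cap N^+_H(v_{j+1})\subseteq X_{j+1}$ chain around the cycle with unequal cardinalities (for instance, when $|X_j|=k_1$, $|X_{j+1}|=k_1-1$ and $N^+_H(v_{j+1})$ is almost contained in $N^+_H(v_j)$, the choice of $X_j$ is essentially forced), so Theorem \ref{saad} cannot be cited as a black box and its swapping argument would have to be re-proved in this non-uniform setting. That missing argument is essentially the work the paper's Claims 1 and 2 carry out, in a different guise, after retreating to the product graph where Theorem \ref{saad} does apply; until you supply it, the proof is incomplete.
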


\begin{proof}
Since $G \setminus M_1$ is a $2k$-regular graph, it has an Eulerian orientation. Consider an orientation for $M_1$. So, $G$ has a $\{k,k+1\}$-orientation, say $O$, where $M_1$ is a perfect matching between vertices of out-degree $k$ and vertices of out-degree $k+1$. Let $H=G \, \square \, K_2$ and $G_1$ and $G_2$ be two copies of $G$ in $H$ with $V(G_1)=\{v_1, \ldots, v_n\}$ and $V(G_2)=\{v'_1, \ldots, v'_n\}$. Also, suppose that $E'$ is the set of all edges between $G_1$ and $G_2$ i.e. $E'=\{v_iv'_i| i=1, \ldots, n\}$. Clearly, $H$ is $(2k+2)$-regular. Consider orientations $O$ and $O'$ for $G_1$ and $G_2$, respectively, where orientation of $O'$ is reverse of $O$. Also, orient the edges of $E'$ from the vertices of out-degree $k$ to the vertices of out-degree $k+1$. This is an Eulerian orientation for $H$. Let $F_1=M_{1} \cup M_{2}$ and $F_2=M'_{1} \cup M'_{2}$ in which $M'_{i}$ is corresponding perfect matching in $G_2$, for $i=1, 2$.
Now, $F=F_1 \cup F_2$ is a 2-factor of $H$ and Theorem \ref{saad} implies that $H$ can be decomposed into $S_{k_1,k_2}$ such that all the central edges of double-stars are the edges of $F$. Now, we prove two following claims.
\vspace{1em}
\\
\textbf{Claim 1.} The graph $H$ has an $S_{k_1,k_2}$-decomposition such that each double-star in $G_1$ has at most one edge in $E'$. 
\vspace{0.5em}
\\
Let $C:v_1, e_1, v_2, \ldots, v_l, e_l, v_1$ be a cycle in $F_1$. Suppose that the double-star $S_{e_i}$, corresponding to $e_i$, has two edges in $E'$. This implies that $d^{+}_{G_1}(v_i)=d^{+}_{G_1}(v_{i+1})=k$ and $e_i\in{M_2}$. Thus $e_{i-1}, e_{i+1}\in{M_1}$ and hence $d^{+}_{G_1}(v_{i-1})=d^{+}_{G_1}(v_{i+2})=k+1$. For every $v_j \in V(C)$, let $X_j$ be the set of all pendant vertices adjacent to $v_j$ in $S_{e_j}$. Similarly, let $Y_j$ be the set of all pendant vertices adjacent to $v_{j+1}$ in $S_{e_j}$. With no loss of generality assume that $|X_j|=k_1$ and $|Y_j|=k_2$, for $j=1, \ldots, l$. Since $S_{e_i}$ has two edges in $E'$, we have $v'_i \in X_i$ and $v'_{i+1} \in Y_i$. We have $v'_i \notin X_{i-1}$ and $v'_{i+1} \notin Y_{i+1}$.
\\
Note that since $|X_i|=|X_{i+1}|=k_1$ and $v'_{i} \notin X_{i+1}$, we conclude that there exists $t\in X_{i+1} \setminus X_i$. Now, define $X'_{i+1}=(X_{i+1} \setminus \{t\}) \cup \{v'_{i+1}\}$ and $Y'_{i}=(Y_{i} \setminus \{v'_{i+1}\}) \cup \{t\}$, see Figure 1.  Let $S'_{e_{i+1}}$ be the double-star with pendant sets $X'_{i+1}$ and $Y_{i+1}$. Similarly, let $S'_{e_i}$ be the double-star with pendant sets $X_{i}$ and $Y'_i$. For any $j \neq i,i+1$, let $S'_{e_j}=S_{e_j}$. 

\begin{figure}
\centering{\includegraphics[width=105mm]{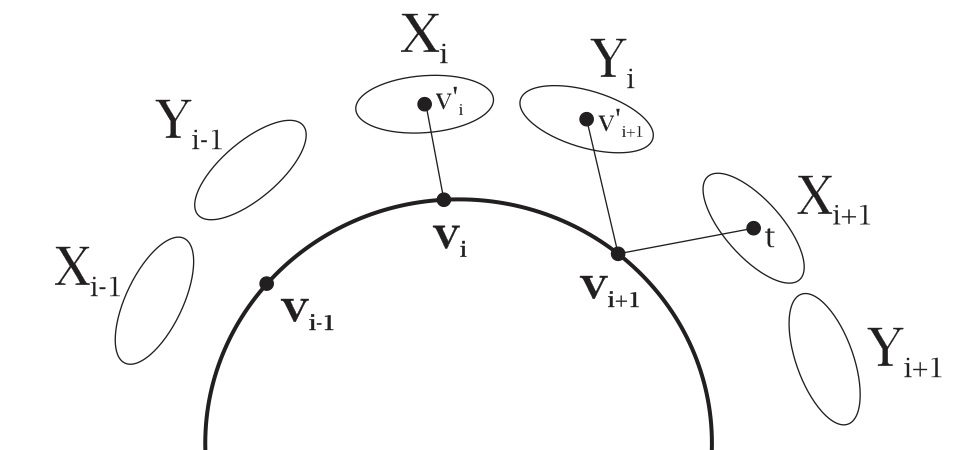}}
\\
{Figure 1.} 
\end{figure}
By repeating this procedure, one can obtain an $S_{ k_1, k_2 }$-decomposition for $H$ in which every double-star has at most one edge in $E'$. This completes the proof of Claim 1.
\vspace{1em}
\\
\textbf{Claim 2.} There exists an $S_{k_1, k_2}$-decomposition for $H$ in which every edge of each double-star contained in $E'$ is incident with a vertex of degree $k_1$.
\vspace{0.5em}
\\
Consider the decomposition given in the Claim 1. Suppose that $v'_{i+1} \in Y_i$. With no loss of generality, we may assume that $v'_2 \in Y_1$. Thus, $v'_2 \notin Y_2 \cup \, Y_l$. This implies that $Y_1 \notin \{Y_2, Y_l\}$. Now, we consider two cases:
\vspace{1em}
\\
\textbf{Case 1.} $X_1 \neq X_2$.
\vspace{0.5em}
\\
There exists $x \in X_2 \setminus X_1$. Now, define $Y'_1=(Y_1 \setminus \{v'_2\}) \cup \{x\}$ and $X'_{2}=(X_{2} \setminus \{x\}) \cup \{v'_2\}$. Let $S'_{e_1}$ be the double-star with pendant sets $X_1$ and $Y'_1$ and $S'_{e_2}$ be the double-star with pendant sets $X'_2$ and $Y_2$. Now, by considering $S'_{e_j}= S_{e_j}$, for $j\in \{3, 4, \ldots, l\}$, the result follows.
\vspace{1em}
\\
\textbf{Case 2.} $X_1=X_2$. We consider two subcases:
\vspace{0.5em}
\\
\textbf{Subcase 2.1.} $X_1=X_2= \cdots = X_l$.
\vspace{0.5em}
\\
\begin{figure}
\centering{\includegraphics[width=65mm]{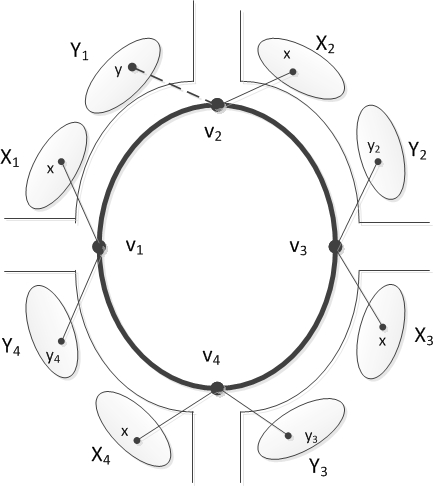}} 
\\
{Figure 2.}
\end{figure}
Let $x \in X_1$. Since $Y_1 \neq Y_l$, there exists $y_l \in Y_l \setminus Y_1$. Now, let $X'_1= (X_1 \setminus \{x\}) \cup \{y_{l}\}$ and $Y'_l=(Y_l \setminus \{y_l\}) \cup \{x\}$. Since $X_1=X_l$ and $x \in Y'_l$, we conclude that $x \notin Y_{l-1}$ and hence there exists $y_{l-1} \in Y_{l-1} \setminus Y'_l$. Now, let $X'_l= (X_l \setminus \{x\}) \cup \{y_{l-1}\}$ and $Y'_{l-1}=(Y_{l-1} \setminus \{y_{l-1}\}) \cup \{x\}$. Repeat this procedure $l$ times. In the final step, let $X'_2= (X_2 \setminus \{x\}) \cup \{v'_2\}$ and $Y'_1=(Y_1 \setminus \{v'_2\}) \cup \{x\}$. One can see that $X'_i \cap Y'_i = \emptyset$, for $i=1,2, \ldots, l$. Let $S'_{e_i}$ be the double-star with pendant sets $X'_i$ and $Y'_i$, see Figure 2. By repeating this procedure we obtain the desired decomposition.
\vspace{1em}
\\
\textbf{Subcase 2.2.} $X_1\neq X_t$, for some $3 \leq t \leq l$.
\vspace{0.5em}
\\
Without loss of generality, we may assume that $X_1=X_l= \cdots =X_{t+1} \neq X_t$. Hence there exists $x \in X_1 \setminus X_t$. Similar to the previous subcase, we can define $X'_1, X'_l, \ldots, X'_{t+1}$ and $Y'_l, Y'_{l-1}, \ldots, Y'_t$. We have $X'_1 \neq X_2$ and according to the Case 1, we are done. 
\vspace{1em}
\\
Now, these two claims yield that the restriction of decomposition of $H$ to $G_1$ is an $\{S_{k_1, k_2}, S_{k_1-1, k_2}\}$-decomposition. This completes the proof.
\end{proof}

In the following theorem, we find another result about the double-star decomposition of $(2k+1)$-regular graphs.

\begin{thm}
Let $G$ be a $(2k+1)$-regular graph containing a perfect matching $M$ and $F$ be a $2$-factor in $G\setminus M$. Suppose that $k_1$ and $k_2$ are two positive integers such that $k_1 + k_2 = k-1$. If for every two adjacent vertices $u$ and $v$ in $F$, $|N_G(u) \cap N_G(v)| \leq k_1-1$, then $G$ is $\{ S_{k_1, k_2}, S_{k_1+1, k_2} \}$-decomposable.
\end{thm}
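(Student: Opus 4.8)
The plan is to mimic the orientation-and-central-edge construction behind Theorem \ref{saad}, but to break the odd parity of $G$ using the perfect matching $M$, so that the central edges range over the $2$-factor $F$ and each resulting double-star is either $S_{k_1,k_2}$ or $S_{k_1+1,k_2}$. First I would fix an orientation of $G$ as follows. The graph $G\setminus(F\cup M)$ is $(2k-2)$-regular, hence it admits an Eulerian orientation in which every vertex has out-degree $k-1$. I orient each cycle of $F$ clockwise (contributing out-degree $1$ at every vertex) and orient the edges of $M$ arbitrarily. Writing $d^+(v)$ for the out-degree of $v$ inside $G\setminus F$, we then have $d^+(v)=k-1$ if the $M$-edge at $v$ points inward and $d^+(v)=k$ if it points outward; in particular $d^+(v)\in\{k-1,k\}$ for every $v$. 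This is precisely the role of $M$: it supplies exactly one extra edge per vertex that tips the otherwise-unbalanced $(2k-1)$-regular graph $G\setminus F$ into a $\{k-1,k\}$ out-degree pattern, and any orientation of $M$ is fine since the statement permits an arbitrary mix of the two double-stars.

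Next, as in Theorem \ref{saad}, I take the edges of $F$ as central edges. Fix a cycle $C\colon v_1,\dots,v_l,v_1$ of $F$ oriented so that $v_j\to v_{j+1}$, and for the central edge $v_jv_{j+1}$ let $S_{e_j}$ be the double-star with pendant set $X_j\subseteq N^+_{G\setminus F}(v_j)$ at $v_j$ and pendant set $Y_j\subseteq N^+_{G\setminus F}(v_{j+1})$ at $v_{j+1}$. The bookkeeping requirement is that at each vertex the out-neighbourhood splits as $N^+_{G\setminus F}(v_j)=X_j\sqcup Y_{j-1}$, so that every edge of $G\setminus F$ is a pendant edge of exactly one double-star and every edge of $F$ is the central edge of exactly one double-star. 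I would keep $|Y_j|=k_2$ fixed for every $j$, which forces $|X_j|=d^+(v_j)-k_2\in\{k_1,k_1+1\}$; thus $S_{e_j}\cong S_{k_1,k_2}$ when $d^+(v_j)=k-1$ and $S_{e_j}\cong S_{k_1+1,k_2}$ when $d^+(v_j)=k$. Since $Y_{j-1}\subseteq N^+_{G\setminus F}(v_{(j-1)+1})=N^+_{G\setminus F}(v_j)$ automatically, the splits are consistent, and the central vertices are excluded from the pendant sets because $v_jv_{j+1}\in F$.

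The one genuine obstacle is ensuring each $S_{e_j}$ is an honest tree rather than a subgraph containing the triangle $v_jv_{j+1}w$; this is exactly the demand $X_j\cap Y_j=\emptyset$, i.e. that no common out-neighbour of $v_j$ and $v_{j+1}$ lies in both pendant sets. Here I would spend the hypothesis. Since $v_j,v_{j+1}$ are adjacent in $F$,
\[
|N^+(v_j)\cap N^+(v_{j+1})|\le |N_G(v_j)\cap N_G(v_{j+1})|\le k_1-1,
\]
so, using $k_1+k_2=k-1$,
\[
|N^+(v_{j+1})\setminus N^+(v_j)|=d^+(v_{j+1})-|N^+(v_j)\cap N^+(v_{j+1})|\ge (k-1)-(k_1-1)=k_2+1.
\]
Hence I may choose $Y_j$ to be any $k_2$-subset of $N^+(v_{j+1})\setminus N^+(v_j)$; then $Y_j\cap N^+(v_j)=\emptyset$, and since $X_j\subseteq N^+(v_j)$ we obtain $X_j\cap Y_j=\emptyset$ regardless of the other choices. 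Because each such choice is purely local, I can make them cycle by cycle with no global conflict.

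Finally I would assemble the pieces: the family $\{S_{e_j}\}$ is pairwise edge-disjoint and covers every edge of $G$ exactly once (central edges from $F$, pendant edges from the oriented $G\setminus F$ via the $X/Y$ split), and each member is isomorphic to $S_{k_1,k_2}$ or $S_{k_1+1,k_2}$, giving the desired $\{S_{k_1,k_2},S_{k_1+1,k_2}\}$-decomposition. I expect the verification of edge-disjointness, the size computation $|X_j|+|Y_j|+1\in\{k,k+1\}$, and the consistency of the splits to be routine; the crux is the counting inequality above, which is precisely where the condition $|N_G(u)\cap N_G(v)|\le k_1-1$ is used (and which the triangle-free hypothesis of the abstract makes automatic).
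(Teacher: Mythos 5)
Your proof is correct, but it takes a genuinely different route from the paper's. The paper first applies Theorem \ref{saad} to the $2k$-regular graph $G\setminus M$ to get an $S_{k_1,k_2}$-decomposition whose central edges are exactly the edges of $F$, and only afterwards inserts each matching edge $uv\in M$ as an extra pendant at the unique double-star in which $u$ carries the $k_1$-pendant set; when the new pendant $v$ already lies in the opposite pendant set (which would create a triangle), the paper repairs the decomposition by a chain of pendant swaps along the cycle of $F$, and the hypothesis $|N_G(u)\cap N_G(v)|\le k_1-1$ is invoked there to guarantee a swap candidate $x\in X_2\setminus X_1$. You instead fold $M$ into the orientation from the outset: an Eulerian orientation of the $(2k-2)$-regular graph $G\setminus(F\cup M)$ together with an arbitrary orientation of $M$ gives a $\{k-1,k\}$ out-degree pattern on $G\setminus F$, and you then choose the pendant sets in a single greedy pass, taking $Y_j$ inside $N^+(v_{j+1})\setminus N^+(v_j)$ so that $X_j\cap Y_j=\emptyset$ holds by construction; the common-neighbour hypothesis enters only through the count $|N^+(v_{j+1})\setminus N^+(v_j)|\ge (k-1)-(k_1-1)=k_2+1$. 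This buys you a proof with no case analysis and no iterated swapping, and in fact your argument has one unit of slack: it would go through under the weaker bound $|N_G(u)\cap N_G(v)|\le k_1$, whereas the paper's swap step uses the stated bound exactly. What the paper's version buys is modularity, since it reuses Theorem \ref{saad} as a black box, while you in effect re-prove a non-uniform variant of it; your verification that the splits $N^+_{G\setminus F}(v_j)=X_j\sqcup Y_{j-1}$ give edge-disjoint covering stars, and that the central vertices avoid the pendant sets because $v_jv_{j+1}\in F$, is complete and correct.
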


\begin{proof}
Theorem \ref{saad} implies that $G \setminus M$ can be decomposed into $S_{k_1,k_2}$ such that the edges of $F$ are the central edges of double-stars. Now, let $e=uv \in M$ and $C:v_1, e_1, v_2, \ldots, v_t, e_t, v_1$ be a cycle in $F$ containing $u$. Without loss of generality we may assume that $u=v_1$. Suppose that $X_i$ is the set of all pendant vertices adjacent to $v_i$ in $S_{e_i}$, for $i=1,2, \ldots, t$. Similarly, let $Y_i$ be the set of all pendant vertices adjacent to $v_{i+1}$ in $S_{e_i}$. With no loss of generality assume that $|X_i|=k_1$ and $|Y_i|=k_2$. We consider two cases:
\vspace{1em}
\\
\textbf{Case 1.} $v \notin Y_1$.
\vspace{0.5em}
\\
Let $X'_1=X_1 \cup \{v\}$. Let $S'_{e_1}$ be the double-star with pendant sets $X'_1$ and $Y_1$. Also, let $S'_{e}=S_{e}$, for $e \neq e_1$. Clearly, every double-star is $S_{k_1,k_2}$ or $S_{k_1+1,k_2}$.
\vspace{1em}
\\
\textbf{Case 2.} $v \in Y_1$.
\vspace{0.5em}
\\
Note that in this case, $v \notin X_2$. Since $|N_G(v_1) \cap N_G(v_2)| \leq k_1-1$, there exists $x \in X_2 \setminus X_1$. Let $X'_2=(X_2 \setminus \{x\}) \cup \{v\}$ and $Y'_1=(Y_1 \setminus \{v\}) \cup \{x\}$ and for $i \neq 2$ define $X'_i=X_i$ and for $j\geq 2$, $Y'_j=Y_j$.
\\
Now, $v \in X'_2$. If $v \notin Y'_2$, then we are done. If $v \in Y'_2$, then repeat this procedure till $v \in X'_j\setminus Y'_j$. Note that this can be done since $v \notin Y_t$. Let $S'_{e_i}$ be the double-star with pendant sets $X'_i$ and $Y'_i$ for $e_i \in E(C)$ and $S'_{e_j}=S_{e_j}$ for $e_j \notin E(C)$. Obviously, every double-star is $S_{k_1,k_2}$ or $S_{k_1+1,k_2}$.
\vspace{1em}
\\
For every $e=uv \in M$, there exists a unique double-star in $G \setminus M$ in which $u$ (or $v$) is a support vertex of degree $k_1$. These imply that $G$ is $\{S_{k_1,k_2}, S_{k_1+1,k_2}\}$-decomposable.
\end{proof}

Now, the following corollary is clear.
\begin{cor}
Let $G$ be a triangle-free $(2k+1)$-regular graph containing a perfect matching and $k_1, k_2$ be two positive integers such that $k_1 + k_2 = k-1$. Then $G$ is $\{S_{k_1, k_2},S_{k_1+1, k_2}\}$-decomposable. 
\end{cor}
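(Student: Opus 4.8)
The plan is to observe that the triangle-free hypothesis forces the quantitative neighborhood condition of the preceding theorem to hold automatically, after which the corollary is immediate. First I would exhibit the $2$-factor that the theorem requires: fixing a perfect matching $M$ of $G$, the graph $G \setminus M$ is $2k$-regular, and by Petersen's theorem (quoted earlier in the excerpt) it is $2$-factorable, so it contains a $2$-factor $F$.

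Next I would verify the inequality $|N_G(u) \cap N_G(v)| \leq k_1 - 1$ for every pair of vertices $u, v$ adjacent in $F$. Such a pair is in particular adjacent in $G$, so any common neighbor $w$ of $u$ and $v$ would yield a triangle $uvw$, contradicting that $G$ is triangle-free. Hence $N_G(u) \cap N_G(v) = \emptyset$ and $|N_G(u) \cap N_G(v)| = 0$. Since $k_1$ is a positive integer we have $k_1 - 1 \geq 0$, so the required bound $0 \leq k_1 - 1$ indeed holds.

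With a valid $2$-factor $F$ in hand and the neighborhood condition verified, I would apply the previous theorem directly to conclude that $G$ is $\{S_{k_1, k_2}, S_{k_1+1, k_2}\}$-decomposable. There is no genuine obstacle in this argument; the only point worth flagging is the use of the positivity of $k_1$, which is exactly what guarantees $k_1 - 1 \geq 0$ and hence the validity of the bound when the common-neighborhood set is empty.
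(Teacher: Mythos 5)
Your proof is correct and is precisely the argument the paper leaves implicit when it calls this corollary ``clear'': Petersen's theorem supplies the $2$-factor $F$ in the $2k$-regular graph $G\setminus M$, and triangle-freeness gives $|N_G(u)\cap N_G(v)|=0\leq k_1-1$ for vertices adjacent in $F$, so the preceding theorem applies. Your explicit remark that the positivity of $k_1$ is what makes the bound $0\leq k_1-1$ valid is a worthwhile detail the paper does not spell out.
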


We have an immediate corollary.
\begin{cor}
Let $r$ and $k$ be two positive integers such that $k\,|\,r$. Suppose that $S$ is a double-star of size $k$. If $G$ is a $2r$-regular graph, then $G$ is decomposed into $S$.
\end{cor}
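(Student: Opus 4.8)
The plan is to reduce the problem to the $2k$-regular case, which is already settled by Theorem \ref{double-n2n}. Since $k \mid r$, write $r = km$ for some positive integer $m$, so that $G$ is a $2km$-regular graph.

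First I would invoke Petersen's Theorem (the same tool used at the start of Section 3), which guarantees that every even-regular graph is $2$-factorable. Applying it to the $2km$-regular graph $G$ yields a partition of $E(G)$ into $km$ edge-disjoint $2$-factors $F_1, \ldots, F_{km}$. Next I would group these $2$-factors into $m$ consecutive blocks of $k$ factors each: for $i = 1, \ldots, m$, let $G_i$ be the spanning subgraph of $G$ with edge set $F_{(i-1)k+1} \cup \cdots \cup F_{ik}$. Because each $F_j$ contributes degree $2$ at every vertex, each $G_i$ is a $2k$-regular spanning subgraph of $G$, and the subgraphs $G_1, \ldots, G_m$ partition $E(G)$.

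Finally, since $S$ is a double-star of size $k$ and each $G_i$ is $2k$-regular, Theorem \ref{double-n2n} implies that every $G_i$ is $S$-decomposable. Concatenating the $m$ decompositions gives an $S$-decomposition of the whole of $E(G)$, as required.

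I do not expect a genuine obstacle here: the statement is essentially a bookkeeping consequence of two results already available. The only point requiring care is the divisibility relation $r = km$, which is exactly what allows the $km$ two-factors to be bundled into $m$ pieces that are each $2k$-regular; once that grouping is in place, Petersen's Theorem supplies the factorization and Theorem \ref{double-n2n} finishes each piece, so the corollary follows immediately.
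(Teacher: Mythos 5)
Your proposal is correct and follows essentially the same route as the paper: decompose the $2r$-regular graph into $r/k$ spanning $2k$-regular subgraphs and apply Theorem \ref{double-n2n} to each. The paper leaves the existence of that decomposition as "straightforward," whereas you justify it explicitly via Petersen's $2$-factorization, which is exactly the intended argument.
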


\begin{proof}
Notice that since $k\,|\,r$, it is straight forward to see that $G$ can be decomposed into $G_1, \ldots, G_s$ such that each $G_i$ is spanning and $2k$-regular for $i=1, \ldots, s$. Now, Theorem \ref{double-n2n} yields that each $G_i$ is $S$-decomposable and this completes the proof.
\end{proof}

\begin{cor}
Let $G$ be a 1-factorable $(2k+1)$-regular graph. If $r \geq 3$ is a positive integer such that $r\,|\,2k+1$, then $G$ is $\{S_{k_1, k_2}, S_{k_1-1, k_2}\}$-decomposable, for any  double-star $S_{k_1, k_2}$ of size $r$.  
\end{cor}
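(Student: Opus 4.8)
The plan is to use the $1$-factorization to peel $G$ into odd-regular pieces and then build the double-stars around a perfect matching of central edges. Exploiting that $G$ is $1$-factorable together with $r\mid 2k+1$, I list the $2k+1$ perfect matchings and group them into $m:=(2k+1)/r$ consecutive blocks of $r$; the union of each block is a spanning $r$-regular, $1$-factorable subgraph, and these blocks partition $E(G)$. Here $r$ is odd (a divisor of the odd number $2k+1$), say $r=2\ell+1$, and $m$ is odd as well, so I can combine the pieces into $(m-1)/2$ disjoint pairs, each a $2r$-regular spanning subgraph and hence $S_{k_1,k_2}$-decomposable by Theorem \ref{double-n2n} (contributing only the first type, all of size $r$). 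This leaves a single $r$-regular $1$-factorable piece $Q$, so it suffices to show that every such $Q$ is $\{S_{k_1,k_2},S_{k_1-1,k_2}\}$-decomposable for $k_1+k_2=r-1$.

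For $Q$ I would fix a perfect matching $N$ as the set of central edges, so that $Q\setminus N$ is $(r-1)$-regular with $r-1=2\ell$ even and therefore carries an Eulerian orientation $O$ of out-degree $\ell$ at every vertex. To each $uv\in N$ I attach the double-star whose pendant edges are the $O$-out-edges at $u$ together with the $O$-out-edges at $v$. Since each non-central edge is the out-edge of exactly one of its endpoints, these pieces are edge-disjoint and cover $E(Q)$, and each is $S_{\ell,\ell}$ of size $r$ — provided $u$ and $v$ never share a common out-neighbour, for otherwise the attached graph carries the triangle on $u$, $v$ and that neighbour and is not a double-star. This settles the balanced case $k_1=k_2=\ell$ once such conflicts are removed.

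For an arbitrary split $k_1\ge k_2$ I would $2$-colour the vertices through $N$, marking one end of each central edge a $k_1$-centre and the other a $k_2$-centre, and then seek an orientation of $Q\setminus N$ whose out-degrees are $k_1$ and $k_2$ at these vertices, relaxed so that a $k_1$-centre may have out-degree $k_1-1$; this single-unit defect is precisely what converts a double-star into the second type $S_{k_1-1,k_2}$. Starting from $O$ (out-degree $\ell$ everywhere), I would reach such an orientation by reversing directed paths and cycles that shift one unit of out-degree from a $k_2$-centre toward a $k_1$-centre, and by reversing short alternating configurations to clear each common-neighbour conflict, in the spirit of the swap arguments (Claims~1 and~2) in the proof of Theorem \ref{main}.

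The main obstacle is that, unlike the situation of Theorem \ref{saad}, the central edges here form a perfect matching rather than a $2$-factor, so there is no cyclic order along which the pendant sets can be rotated to force disjointness. One must therefore secure two things simultaneously: disjoint pendant sets at the two ends of every central edge, so that each attached piece is a genuine double-star and not a triangle-carrying graph, and a rebalancing of the out-degrees to $\{k_1,k_2\}$ with one unit of slack. I expect the feasibility of this degree-constrained reorientation — established either through a Hakimi-type orientation criterion that absorbs the imbalance into the $S_{k_1-1,k_2}$ slack, or through a terminating local-swap procedure modelled on the proof of Theorem \ref{main} — to be the crux of the argument.
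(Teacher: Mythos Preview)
Your approach has a genuine gap, and it is avoidable: the paper's proof is a two-line application of Theorem~\ref{main}. After grouping the $1$-factors into $s=(2k+1)/r$ blocks to obtain spanning $r$-regular subgraphs $G_1,\ldots,G_s$ (which you also do), each $G_i$ is itself $1$-factorable---it is a union of $r\ge 3$ perfect matchings---and therefore contains two disjoint perfect matchings. Since $r$ is odd, Theorem~\ref{main} applies directly to every $G_i$, and the corollary follows. There is no need to pair blocks into $2r$-regular pieces via Theorem~\ref{double-n2n}, and no need to treat a leftover block separately.

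The gap lies in your handling of the leftover piece $Q$. Taking a single perfect matching $N$ as the set of central edges and distributing pendants via an Eulerian orientation of $Q\setminus N$ runs into exactly the obstruction you flag: the two ends of a central edge may share out-neighbours, producing triangles rather than double-stars, and because $N$ is a matching rather than a $2$-factor there is no cyclic order along which pendant sets can be rotated as in Theorem~\ref{saad}. For the unbalanced case you additionally need an orientation of $Q\setminus N$ with out-degrees $k_1$ on one colour class of $N$ and $k_2$ on the other (with one unit of slack), and you offer only a speculative ``Hakimi-type criterion or terminating local-swap procedure'' for this. What you are attempting here is, in effect, an independent proof of Theorem~\ref{main} under a \emph{harder} setup; the paper's proof of that theorem sidesteps precisely this difficulty by passing to $Q\,\square\,K_2$, where the two disjoint perfect matchings assemble into a genuine $2$-factor to which Theorem~\ref{saad} applies, and then restricting back. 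You should simply invoke Theorem~\ref{main} on each $G_i$ rather than re-derive it.
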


\begin{proof}
Let $2k+1=rs$. It is easy to see that $G$ can be decomposed into spanning subgraphs $G_1, \ldots, G_s$ such that each $G_i$ is $r$-regular, for $i=1, \ldots, s$. Now, Theorem \ref{main} completes the proof.
\end{proof}

\end{document}